\documentclass[11pt,a4paper]{article}
\usepackage{epsf,epsfig,amsfonts,amsgen,amsmath,amstext,amsbsy,amsopn,amsthm,lineno}
\usepackage{color}
\usepackage{graphicx}
\setlength{\textwidth}{150mm}
\setlength{\oddsidemargin}{7mm} \setlength{\evensidemargin}{7mm}
\setlength{\topmargin}{-5mm} \setlength{\textheight}{245mm}
\topmargin -18mm

\newtheorem{thm}{Theorem}

\newtheorem{lemma}{Lemma}

\theoremstyle{definition}

\newtheorem{claim}{Claim}

\newtheorem{remark}[claim]{Remark}

\renewcommand{\mod}{{\rm mod}}

\baselineskip 15pt

\begin{document}
\title
{\bf\Large Notes on $q$-analogues of a binomial congruence of Glaisher}

\date{}
\author{\small Bo Ning\thanks{E-mail address: ningbo\_math84@mail.nwpu.edu.cn (B. Ning)}\\[2mm]
\small Department of Applied Mathematics, School of Science,\\
\small Northwestern Polytechnical University,Xi'an, Shaanxi 710072, P.R.~China}
\maketitle
\begin{abstract}
Andrews once gave $q$-analogues of a binomial congruence of Glaisher, and he suggested perfect $q$-analogues. In this short note we give ones meeting the demand of Andrews.
\medskip\\
\noindent {\bf Keywords: $q$-analogue; binomial coefficient; binomial congruence; Glaisher's congruence}
\smallskip

\noindent {\bf AMS Subject Classification (2000): Primary 05A10; Secondary 11A07, 11B65.}
\end{abstract}
An interesting result of Glaisher \cite{Glaisher} asserted that if $p\geq 5$ is a prime and $m\geq 1$ is an integer, then
\begin{align}\label{al1}
\binom{mp+p-1}{p-1}\equiv1 ~~(\mod~p^3).
\end{align}
Obviously, his result can be written as:
\begin{align}\label{al2}
(mp+1)(mp+2)\cdots (mp+p-1)\equiv(p-1)!~~(\mod~p^3).
\end{align}
As shown in \cite{Gasper_Rahman}, we define the \emph{$q$-number} by
\begin{align}\label{al3}
[n]_{q}=\frac{1-q^n}{1-q}~(q\neq1),
\end{align}
the \emph{$q$-factorial} by
\begin{align}\label{al4}
{[n]!}_q=[n]_q[n-1]_q\cdots[1]_q,
\end{align}
the \emph{$q$-binomial coefficient} by
\begin{align}\label{al5}
\binom{n}{k}_q=\frac{{[n]!}_q}{{[k]!}_q{[n-k]!}_q},
\end{align}
and the \emph{$q$-shifted factorial} by
\begin{align}\label{al6}
(A;q)_{n}=(1-A)(1-Aq)\cdots (1-Aq^{n-1}).
\end{align}
In view of $q$-analogues of Glaisher's congruence, Andrews \cite{Andrews} showed that if $p$ is an odd prime and $m\geq 1$ be an integer, then
\begin{align}\label{al7}
\binom{(m+1)p-1}{p-1}_q\equiv q^{mp(p-1)/2}~~~~(\mod~[p]_q^2).
\end{align}
The congruence (\ref{al7}) can be considered as a $q$-analogue of (\ref{al1}).
Furthermore, Andrews \cite{Andrews} suggested perfect analogues of (\ref{al1}) and (\ref{al2}), that is, $q$-congruences modulo $[p]_q^3$.

The main purpose of this short note is to extend Andrews' congruence (\ref{al7}) as follows.
\begin{thm}\label{th1}
Let $p\geq 5$ be a prime and $m\geq 1$ be an integer. Then
\begin{align}\label{al8}
&\binom{(m+1)p-1}{p-1}_q \nonumber\\
&\equiv q^{mp(p-1)/2}-(\frac{m^2(p^2-1)}{8}+\frac{m(p-1)(7p-5)}{24})(1-q)^2[p]_q^2~~(\mod~[p]_q^3).
\end{align}
\end{thm}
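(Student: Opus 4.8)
\emph{Proof plan.} The idea is to write the $q$-binomial coefficient as a product, linearize each factor in $[p]_q$ via $q^p=1-(1-q)[p]_q$, and reduce everything to two $q$-harmonic-sum congruences. Concretely, $\binom{(m+1)p-1}{p-1}_q=\prod_{j=1}^{p-1}\frac{[mp+j]_q}{[j]_q}=\prod_{j=1}^{p-1}\frac{1-q^{mp+j}}{1-q^j}$. Put $A:=1-q^p=(1-q)[p]_q$, so $q^p=1-A$ and $[p]_q^3\mid A^3$; then $q^{mp+j}=q^j(1-A)^m\equiv q^j\big(1-mA+\binom m2 A^2\big)\pmod{[p]_q^3}$, and dividing by $1-q^j=(1-q)[j]_q$ gives, for each $j$,
\[
\frac{1-q^{mp+j}}{1-q^j}\equiv 1+m\,\frac{q^j}{[j]_q}\,[p]_q-\binom m2(1-q)\,\frac{q^j}{[j]_q}\,[p]_q^{2}\pmod{[p]_q^3}.
\]
(One works in $\mathbb Z[q]$ localized at $[1]_q,\dots,[p-1]_q$, each coprime to $[p]_q$; since both sides of (\ref{al8}) lie in $\mathbb Z[q]$ — the displayed coefficient is an integer for $p\ge5$ — the congruence descends to $\mathbb Z[q]$.) Multiplying the $p-1$ factors and keeping only terms of degree $\le2$ in $[p]_q$, with $x_j:=q^j/[j]_q$, $S_1:=\sum_{j=1}^{p-1}x_j$, $S_2:=\sum_{j=1}^{p-1}x_j^2$,
\[
\binom{(m+1)p-1}{p-1}_q\equiv 1+mS_1[p]_q+\Big(\tfrac{m^2}{2}(S_1^2-S_2)-\binom m2(1-q)S_1\Big)[p]_q^{2}\pmod{[p]_q^3},
\]
so what is needed is $S_1$ modulo $[p]_q^2$ and $S_1,S_2$ modulo $[p]_q$.

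For this, note $x_j=1/[j]_q-(1-q)$, hence $S_1=\big(\sum_{j}1/[j]_q\big)-(p-1)(1-q)$ and $S_2=\sum_j1/[j]_q^2-2(1-q)\sum_j1/[j]_q+(p-1)(1-q)^2$. I will use the $q$-analogue of Wolstenholme's harmonic congruence (valid for $p\ge5$), $\sum_{j=1}^{p-1}1/[j]_q\equiv\frac{p-1}{2}(1-q)+\frac{p^2-1}{24}(1-q)^2[p]_q\pmod{[p]_q^2}$, which is known and can also be reproved by the pairing $j\leftrightarrow p-j$: from $q^{p-j}=(1-A)q^{-j}$ one gets $[p-j]_q=q^{-j}([p]_q-[j]_q)$, whence $1/[p-j]_q\equiv-1/[j]_q+(1-q)-[p]_q\big(1/[j]_q^2-(1-q)/[j]_q\big)\pmod{[p]_q^2}$, and summing over $j$ expresses $2\sum_j1/[j]_q$ through $\sum_j1/[j]_q$ and $\sum_j1/[j]_q^2$ modulo $[p]_q$. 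The remaining mod-$[p]_q$ data comes from the fact that $[p]_q=\Phi_p(q)$ is irreducible, so reducing mod $[p]_q$ means substituting a primitive $p$-th root of unity $\zeta$: with $f(x)=\frac{x^p-1}{x-1}$ one computes $\sum_{j=1}^{p-1}\frac1{(1-\zeta^j)^2}=\frac{f'(1)^2-f''(1)f(1)}{f(1)^2}=\frac{(p-1)(5-p)}{12}$, so $\sum_j1/[j]_q^2\equiv\frac{(p-1)(5-p)}{12}(1-q)^2\pmod{[p]_q}$. Combining, $S_1\equiv-\frac{p-1}{2}(1-q)+\frac{p^2-1}{24}(1-q)^2[p]_q\pmod{[p]_q^2}$ and $S_2\equiv\frac{(p-1)(5-p)}{12}(1-q)^2\pmod{[p]_q}$.

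Finally, substitute these into the product formula, expand $q^{mp(p-1)/2}=(1-A)^{m(p-1)/2}\equiv1-\frac{m(p-1)}{2}(1-q)[p]_q+\binom{m(p-1)/2}{2}(1-q)^2[p]_q^2\pmod{[p]_q^3}$, and subtract: the $[p]_q$-terms cancel and one is left, after elementary algebra, with a multiple of $(1-q)^2[p]_q^2$, namely the correction term of (\ref{al8}). The crux is the middle step — pinning down $S_1$ modulo $[p]_q^2$, i.e. the $q$-Wolstenholme congruence including its $[p]_q$-coefficient, which contributes to both the linear and the quadratic term — together with tracking how far each harmonic sum must be carried; the closing simplification is mechanical but error-prone, so the final coefficient ought to be cross-checked on a small case such as $p=5$.
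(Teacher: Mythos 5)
Your route is genuinely different from the paper's: the paper divides Straub's $q$-Ljunggren congruence (Lemma 1) for $\binom{(m+1)p}{p}_q$ by $[(m+1)p]_q/[p]_q$ and expands, whereas you factor $\binom{(m+1)p-1}{p-1}_q=\prod_{j=1}^{p-1}\frac{1-q^{mp+j}}{1-q^j}$, linearize each factor in $[p]_q$, and reduce to the $q$-harmonic sums. Your intermediate ingredients all check out: $q^j/[j]_q=1/[j]_q-(1-q)$, the product expansion $1+mS_1[p]_q+\bigl(\tfrac{m^2}{2}(S_1^2-S_2)-\tbinom{m}{2}(1-q)S_1\bigr)[p]_q^2$, the localization/descent remark, the value $\sum_j 1/[j]_q^2\equiv\frac{(p-1)(5-p)}{12}(1-q)^2\pmod{[p]_q}$, and the $q$-Wolstenholme congruence with the stated signs (your pairing sketch does reprove it). The problem is the step you left unperformed: the ``elementary algebra'' at the end does \emph{not} land on the correction term of (\ref{al8}). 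Substituting your own $S_1,S_2$ into your product formula and subtracting the expansion of $q^{mp(p-1)/2}$ gives
\[
\binom{(m+1)p-1}{p-1}_q-q^{mp(p-1)/2}\equiv \frac{m(m+1)(p^2-1)}{24}(1-q)^2[p]_q^2 \pmod{[p]_q^3},
\]
whose coefficient differs from $-\bigl(\frac{m^2(p^2-1)}{8}+\frac{m(p-1)(7p-5)}{24}\bigr)$, and two distinct constants cannot be congruent here since $\deg (1-q)^2<\deg [p]_q$.

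The cross-check you yourself propose settles which side is at fault: for $p=5$, $m=1$, expanding $\binom{9}{4}_q$ in powers of $1-q^5$ gives $\binom{9}{4}_q\equiv q^{10}+2(1-q)^2[5]_q^2\pmod{[5]_q^3}$, matching your $\frac{m(m+1)(p^2-1)}{24}=2$, while (\ref{al8}) predicts $-8$. So your method, carried to completion, proves a corrected version of the congruence (one that is also consistent with Theorem 2 in the paper's Remark after dividing by $(q;q)_{p-1}\equiv p\pmod{[p]_q}$), but it cannot prove the statement as printed, and asserting that it does is the gap. For your orientation, the discrepancy originates in the paper's own argument: it truncates $1-q^{(m+1)p}$, $1-q^{p^2}$, $1-q^{(m+1)p^2}$ modulo $[p]_q^3$ via (\ref{al11}) and then cancels a common factor $[p]_q$ between numerator and denominator; since those quantities vanish to first order in $[p]_q$, this only determines the quotients modulo $[p]_q^2$, and the discarded terms with coefficients $\binom{m+1}{3}$, $\binom{p}{3}$, $\binom{(m+1)p}{3}$ re-enter exactly at the $[p]_q^2$ level, which is what shifts the final constant. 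If you complete your computation and state the congruence with the coefficient $\frac{m(m+1)(p^2-1)}{24}$, your proof is sound; to claim (\ref{al8}) verbatim, it is not salvageable.
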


\begin{lemma}\label{le1}
Let $p\geq 5$ be a prime and $m\geq 1$ be an integer. Then
\begin{align}\label{al9}
\binom{(m+1)p}{p}_q\equiv \binom{m+1}{1}_{q^{p^2}}-\binom{m+1}{2}\frac{p^2-1}{12}(1-q)^2[p]_q^2 ~(\mod~[p]_q^3).
\end{align}
\end{lemma}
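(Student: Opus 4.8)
The plan is to reduce the left side of (\ref{al9}) to a product, expand everything as a polynomial in $w:=1-q^{p}=(1-q)[p]_q$ truncated modulo $[p]_q^{3}$, and match coefficients with the right side. Write $\Phi_{p}(q)=[p]_q=1+q+\cdots+q^{p-1}$, the $p$-th cyclotomic polynomial, which is irreducible with $\Phi_{p}(1)=p$. Directly from (\ref{al5}),
\[
\binom{(m+1)p}{p}_q=\prod_{j=1}^{p}\frac{1-q^{mp+j}}{1-q^{j}}=[m+1]_{q^{p}}\prod_{j=1}^{p-1}\frac{1-q^{mp+j}}{1-q^{j}},
\]
after splitting off the factor $j=p$. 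All congruences below are modulo $[p]_q^{3}$; here every integer prime to $p$ and every $1-q^{j}$ with $1\le j\le p-1$ is invertible (by irreducibility of $\Phi_{p}$ and $\Phi_{p}(1)=p$), while $w^{3}=(1-q)^{3}[p]_q^{3}\equiv0$. From $q^{mp}=(1-w)^{m}$ I expand $q^{mp+j}\equiv q^{j}\bigl(1-mw+\binom{m}{2}w^{2}\bigr)$, so with $a_{j}:=q^{j}/(1-q^{j})$ each factor of the product equals $1+ma_{j}w-\binom{m}{2}a_{j}w^{2}$; multiplying over $j$ and putting $S_{1}=\sum_{j=1}^{p-1}a_{j}$, $S_{2}=\sum_{j=1}^{p-1}a_{j}^{2}$ yields
\[
\prod_{j=1}^{p-1}\frac{1-q^{mp+j}}{1-q^{j}}\equiv 1+mwS_{1}+w^{2}\Bigl(\frac{m^{2}(S_{1}^{2}-S_{2})}{2}-\binom{m}{2}S_{1}\Bigr).
\]

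The heart of the matter is the evaluation of $S_{1}$ modulo $[p]_q^{2}$ and of $S_{2}$ modulo $[p]_q$. Modulo $[p]_q$ the element $q$ becomes a primitive $p$-th root of unity, so $\{q^{j}\}_{j=1}^{p-1}$ runs through all $p$-th roots of unity $\omega\neq1$; evaluating the logarithmic derivative of $\Phi_{p}$ and its derivative at $q=1$ (with $\Phi_{p}(1)=p$, $\Phi_{p}'(1)=\binom{p}{2}$, $\Phi_{p}''(1)=\tfrac13 p(p-1)(p-2)$) recovers the classical identities $\sum_{\omega}\tfrac{1}{1-\omega}=\tfrac{p-1}{2}$ and $\sum_{\omega}\tfrac{1}{(1-\omega)^{2}}=\tfrac{(p-1)(5-p)}{12}$, whence
\[
S_{1}\equiv-\frac{p-1}{2},\quad S_{2}\equiv\frac{(p-1)(5-p)}{12},\quad \sum_{j=1}^{p-1}\frac{q^{j}}{(1-q^{j})^{2}}\equiv-\frac{p^{2}-1}{12}\pmod{[p]_q}.
\]
For $S_{1}$ I need one more term. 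Pairing $j$ with $p-j$ and writing $q^{\,p-j}=(1-w)q^{-j}$, a first-order expansion in $w$ gives
\[
\frac{q^{j}}{1-q^{j}}+\frac{q^{\,p-j}}{1-q^{\,p-j}}\equiv-1-w\,\frac{q^{j}}{(1-q^{j})^{2}}\pmod{[p]_q^{2}};
\]
summing over $j=1,\dots,(p-1)/2$ and invoking the third evaluation above (using also $q^{\,p-j}/(1-q^{\,p-j})^{2}\equiv q^{j}/(1-q^{j})^{2}\pmod{[p]_q}$), I obtain $S_{1}\equiv-\tfrac{p-1}{2}+\tfrac{p^{2}-1}{24}\,w\pmod{[p]_q^{2}}$.

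It remains to assemble. Expanding $[m+1]_{q^{p}}=\sum_{k=0}^{m}(1-w)^{k}\equiv\binom{m+1}{1}-\binom{m+1}{2}w+\binom{m+1}{3}w^{2}$ and multiplying by the product above with $S_{1},S_{2}$ replaced by their values, the $w^{0},w^{1},w^{2}$ coefficients of $\binom{(m+1)p}{p}_q$ reduce, after a short direct calculation, to $\binom{m+1}{1}$, $-p\binom{m+1}{2}$, and $\binom{p}{2}\binom{m+1}{2}+p^{2}\binom{m+1}{3}-\binom{m+1}{2}\frac{p^{2}-1}{12}$. On the other side, $v:=1-q^{p^{2}}=1-(1-w)^{p}\equiv pw-\binom{p}{2}w^{2}$, so $\binom{m+1}{1}_{q^{p^{2}}}=[m+1]_{q^{p^{2}}}=\sum_{k=0}^{m}(1-v)^{k}\equiv\binom{m+1}{1}-p\binom{m+1}{2}w+\bigl(\binom{p}{2}\binom{m+1}{2}+p^{2}\binom{m+1}{3}\bigr)w^{2}$, and subtracting $\binom{m+1}{2}\frac{p^{2}-1}{12}(1-q)^{2}[p]_q^{2}=\binom{m+1}{2}\frac{p^{2}-1}{12}w^{2}$ matches these three coefficients exactly; this gives (\ref{al9}).

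The step I expect to be the main obstacle is the refined value of $S_{1}$ modulo $[p]_q^{2}$: its residue modulo $[p]_q$ is a routine root-of-unity sum, but the next term, extracted through the $j\leftrightarrow p-j$ pairing together with $q^{p}=1-(1-q)[p]_q$, is precisely what pushes the computation up to the third power of $[p]_q$ and produces the constant $\tfrac{p^{2}-1}{12}$ in the lemma; one must also keep careful track of which sums are needed only modulo $[p]_q$ and which modulo $[p]_q^{2}$, so that no contributing term is dropped.
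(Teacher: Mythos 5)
Your proof is correct, but it is not the route the paper takes: for this lemma the paper gives no argument at all, simply citing Straub's $q$-analogue of Ljunggren's congruence (Theorem 1 of \cite{Straub}, of which (\ref{al9}) is the case $a=m+1$, $b=1$, $n=p$) or Lemma 4 of \cite{Ning}. You instead give a self-contained proof of this special case: split off $[m+1]_{q^p}$, expand the remaining product in $w=1-q^p=(1-q)[p]_q$ modulo $[p]_q^3$, and evaluate the resulting cyclotomic sums. I checked the ingredients: $\sum_{\omega\neq1}(1-\omega)^{-1}=\frac{p-1}{2}$ and $\sum_{\omega\neq1}(1-\omega)^{-2}=\frac{(p-1)(5-p)}{12}$ are correct, giving $S_2\equiv\frac{(p-1)(5-p)}{12}$ and $\sum_{j}q^j(1-q^j)^{-2}\equiv-\frac{p^2-1}{12}$ modulo $[p]_q$; the $j\leftrightarrow p-j$ pairing does yield $S_1\equiv-\frac{p-1}{2}+\frac{p^2-1}{24}\,w$ modulo $[p]_q^2$; and the $w^0,w^1,w^2$ coefficients you state for the two sides coincide (after clearing denominators, both sides of the $w^2$ identity equal $\frac{m(m+1)(4mp^2+p^2-6p+1)}{24}$), so the matching is genuine and the replacement of $S_1$, $S_2$ by their residues loses nothing, since they are multiplied by $w$ and $w^2$ respectively. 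Two points worth making explicit in a final write-up: the divisions by $1-q^j$ take place in $\mathbb{Q}[q]/([p]_q^3)$, and the congruence descends to $\mathbb{Z}[q]$ because $[p]_q^3$ is monic and $\frac{p^2-1}{12}\in\mathbb{Z}$ for $p\geq5$; and the pairing requires $p$ odd. As for what each approach buys: the citation imports Straub's congruence in full generality (arbitrary inner binomial coefficient) at zero cost, whereas your computation keeps the note self-contained and makes visible that, in the case needed here, the constant $\frac{p^2-1}{12}$ comes directly from the classical root-of-unity sums.
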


\begin{proof}
See Lemma 4 in \cite{Ning} or Theorem 1 in \cite{Straub}.
\end{proof}

\begin{lemma}\label{le1}
Let $p$ be an odd prime and $s\geq 1$ be an integer. Then
\begin{align}\label{al10}
q^{pi}\equiv\sum\limits_{k=0}^{s-1}\binom{i}{k}(-1)^{k}[p]_{q}^k(1-q)^k ~~(\mod~[p]_q^s).
\end{align}
\end{lemma}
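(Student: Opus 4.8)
The plan is to deduce (\ref{al10}) from a single application of the binomial theorem, after the change of variable that makes $[p]_q$ the natural expansion parameter. The starting point is the exact identity
\[
q^p=1-(1-q)[p]_q
\]
in $\mathbb{Z}[q]$, which is merely a rearrangement of the definition $[p]_q=(1-q^p)/(1-q)$. Writing $t:=(1-q)[p]_q=1-q^p$, we get $q^{pi}=(1-t)^i$.

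Since $i$ is a nonnegative integer, the binomial theorem yields the polynomial identity
\[
q^{pi}=(1-t)^i=\sum_{k=0}^{i}\binom{i}{k}(-1)^k t^k=\sum_{k=0}^{i}\binom{i}{k}(-1)^k(1-q)^k[p]_q^k .
\]
Now split the sum at $k=s-1$: for every $k\geq s$ the summand is a multiple of $[p]_q^k$, hence of $[p]_q^s$, and therefore vanishes modulo $[p]_q^s$. Consequently
\[
q^{pi}\equiv\sum_{k=0}^{\min\{s-1,\,i\}}\binom{i}{k}(-1)^k(1-q)^k[p]_q^k\pmod{[p]_q^s}.
\]
If $i\geq s-1$ the upper limit is $s-1$ and this is exactly (\ref{al10}). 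If $i<s-1$, then $\binom{i}{k}=0$ for $i<k\leq s-1$, so the right-hand side coincides with $\sum_{k=0}^{s-1}\binom{i}{k}(-1)^k(1-q)^k[p]_q^k$ as well; either way (\ref{al10}) follows.

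I do not expect any genuine obstacle: the identity $q^p=1-(1-q)[p]_q$ carries the whole argument, and in particular neither the primality nor the oddness of $p$ plays any role. The only points deserving a line of care are that the congruences are to be read in the polynomial ring $\mathbb{Z}[q]$ — so ``divisible by $[p]_q^s$'' means the quotient again lies in $\mathbb{Z}[q]$ — and the trivial bookkeeping in the degenerate range $i<s-1$, dispatched above via the vanishing of $\binom{i}{k}$. An alternative route is induction on $i$: the cases $i=0,1$ are immediate, and the step $i\to i+1$ combines $q^{p(i+1)}=q^{pi}\bigl(1-(1-q)[p]_q\bigr)$ with Pascal's rule $\binom{i}{k}+\binom{i}{k-1}=\binom{i+1}{k}$; but the direct expansion above is shorter and cleaner.
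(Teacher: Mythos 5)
Your proof is correct and is essentially the paper's own argument: both expand $q^{pi}=(1-(1-q^p))^i=(1-(1-q)[p]_q)^i$ by the binomial theorem and discard the terms with $k\geq s$, which are divisible by $[p]_q^s$. Your extra remark about the degenerate range $i<s-1$ (where $\binom{i}{k}=0$) is a small point the paper leaves implicit, but it changes nothing in substance.
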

\begin{proof}
\begin{align*}
q^{pi}&=(1-(1-q^p))^i=\sum\limits_{k=0}^{i}\binom{i}{k}(-1)^{k}(1-q^p)^{k}=\sum\limits_{k=0}^{i}\binom{i}{k}(-1)^{k}[p]_{q}^k(1-q)^k\\
&\equiv\sum\limits_{k=0}^{s-1}\binom{i}{k}(-1)^{k}[p]_{q}^k(1-q)^k ~~(\mod~[p]_q^s).
\end{align*}
\end{proof}

If we put $s=3$, then by Lemma \ref{le1}, we have
\begin{align}\label{al11}
q^{pi}&\equiv 1-i(1-q)[p]_q+\frac{i(i-1)}{2}(1-q)^2[p]_q^2~~(\mod~[p]_q^3).
\end{align}

\noindent{} {\bf {Proof of Theorem \ref{th1}.}}
\begin{align*}
&\binom{(m+1)p-1}{p-1}_q\\
&=\frac{[p]_q}{[(m+1)p]_q}\binom{(m+1)p}{p}_q\\
&\equiv\frac{1-q^p}{1-q^{(m+1)p}}(\frac{1-q^{(m+1)p^2}}{1-q^{p^2}}-\binom{m+1}{2}\frac{p^2-1}{12}(1-q)^2[p]_q^2)~~(\mod~[p]_q^3)~~~~~~~~(by~(\ref{al9}))\\
&\equiv\frac{(1-q)[p]_q}{(m+1)(1-q)[p]_q-\frac{(m+1)m}{2}(1-q)^2[p]_q^2}\cdot(\frac{(m+1)p(1-q)[p]_q-\frac{(m+1)p(mp+p-1)}{2}(1-q)^2[p]_q^2}{p(1-q)[p]_q-\frac{p(p-1)}{2}(1-q)^2[p]_q^2}\\
&-\frac{m(m+1)}{2}\frac{p^2-1}{12}(1-q)^2[p]_q^2)~~(\mod~[p]_q^3)~~~~~~~~~(by~(\ref{al11}))\\
&\equiv\frac{1}{1-\frac{m}{2}(1-q)[p]_q}\cdot(\frac{1-\frac{mp+p-1}{2}(1-q)[p]_q}{1-\frac{p-1}{2}(1-q)[p]_q}-\frac{m(p^2-1)}{24}(1-q)^2[p]_q^2)~~(\mod~[p]_q^3)\\
&\equiv(1+\frac{m}{2}(1-q)[p]_q+\frac{m^2}{4}(1-q)^2[p]_q^2)\cdot(1-\frac{mp}{2}(1-q)[p]_q-\frac{m(7p^2-6p-1)}{24}(1-q)^2[p]_q^2)\\
&~~~~~~(\mod~[p]_q^3)\\
&\equiv1-\frac{m(p-1)}{2}(1-q)[p]_q+(\frac{m^2}{4}-\frac{m^2p}{4}-\frac{m(7p^2-6p-1)}{24})(1-q)^2[p]_q^2~~(\mod~[p]_q^3)\\
&\equiv q^{mp(p-1)/2}-(\frac{m^2(p^2-1)}{8}+\frac{m(p-1)(7p-5)}{24})(1-q)^2[p]_q^2~~(\mod~[p]_q^3).
\end{align*}
The proof is complete. {\hfill$\Box$}

\begin{remark}
In \cite{Andrews}, Andrews also gave a $q$-analogue of (\ref{al2}). He proved that if $p$ is an odd prime and $m\geq 1$, then
\begin{align}\label{al12}
\frac{{(q^{mp+1};q)_{p-1}}-q^{mp(p-1)/2}(q;q)_{p-1}}{(1-q^{(m+1)p})(1-q^{mp})}\equiv\frac{(p^2-1)p}{24}~~(\mod~[p]_q).
\end{align}
In \cite{Andrews}, Andrews once wrote the following words: "the empirical studies I made which led to Theorem 1 do not yield any instances of congruences modulo $[p]_q^3$". However, his studies surely can imply a $q$-analogue of congruence (\ref{al2}) modulo $[p]_q^3$. Noting that by Lemma \ref{le1}, we have $1-q^{(m+1)p}\equiv (m+1)(1-q)[p]_q ~(\mod~[p]_q^2)$ and $1-q^{mp}\equiv m(1-q)[p]_q ~(\mod~[p]_q^2)$. And Andrews' congruence (\ref{al12}) implies
\begin{thm}
\begin{align}
(q^{mp+1};q)_{p-1}-q^{mp(p-1)/2}(q;q)_{p-1}\equiv\frac{m(m+1)(p^2-1)p}{24}(1-q)^2[p]_q^2~~(\mod~[p]_q^3).
\end{align}
\end{thm}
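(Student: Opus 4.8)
Write $N=(q^{mp+1};q)_{p-1}-q^{mp(p-1)/2}(q;q)_{p-1}$ and $D=(1-q^{(m+1)p})(1-q^{mp})$, so that Andrews' congruence (\ref{al12}) reads $N/D\equiv\frac{(p^2-1)p}{24}\pmod{[p]_q}$, a statement about the polynomial $N/D$ (the quotient being a polynomial is part of Andrews' result). The plan is to clear the denominator $D$ and then exploit the fact that $D$ itself is divisible by $[p]_q^2$, so that multiplying a congruence modulo $[p]_q$ by $D$ automatically produces one modulo $[p]_q^3$; thus nothing beyond (\ref{al12}) and a couple of elementary congruences is required.

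In detail, I would write (\ref{al12}) as $N/D=\frac{(p^2-1)p}{24}+[p]_q R$ for some polynomial $R$, and multiply by $D$ to get $N=\frac{(p^2-1)p}{24}D+[p]_q D R$. Because $1-q^{kp}$ is divisible by $1-q^{p}=(1-q)[p]_q$, hence by $[p]_q$, for every $k\ge 1$, the product $D=(1-q^{(m+1)p})(1-q^{mp})$ is divisible by $[p]_q^2$; therefore $[p]_q D R$ is divisible by $[p]_q^3$ and
\begin{align*}
N\equiv\frac{(p^2-1)p}{24}\,D\pmod{[p]_q^3}.
\end{align*}
It then remains to compute $D$ modulo $[p]_q^3$. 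Using the congruences $1-q^{(m+1)p}\equiv(m+1)(1-q)[p]_q$ and $1-q^{mp}\equiv m(1-q)[p]_q$ modulo $[p]_q^2$ (the case $s=2$ of Lemma \ref{le1}, recalled above) and multiplying them --- the point being that each right-hand side already carries a factor $[p]_q$, so all cross terms are divisible by $[p]_q^3$ --- one obtains $D\equiv m(m+1)(1-q)^2[p]_q^2\pmod{[p]_q^3}$. Substituting this (and noting that $D-m(m+1)(1-q)^2[p]_q^2$ is a multiple of $[p]_q^3$) gives
\begin{align*}
N\equiv\frac{m(m+1)(p^2-1)p}{24}(1-q)^2[p]_q^2\pmod{[p]_q^3},
\end{align*}
which is the assertion.

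I do not expect a genuine obstacle: Andrews' (\ref{al12}) does all the real work and the rest is bookkeeping. The one step worth a careful sentence is the upgrade from a congruence modulo $[p]_q$ to one modulo $[p]_q^3$, which is legitimate precisely because the multiplier $D$ is divisible by $[p]_q^2$; a lesser point is that the product-of-congruences step combines two congruences valid only modulo $[p]_q^2$, which is harmless here only because the leading terms $(m+1)(1-q)[p]_q$ and $m(1-q)[p]_q$ are themselves multiples of $[p]_q$.
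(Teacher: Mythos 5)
Your proposal is correct and follows essentially the same route as the paper: the paper's remark likewise derives the theorem by clearing the denominator $(1-q^{(m+1)p})(1-q^{mp})$ in Andrews' congruence (\ref{al12}) and replacing it, via the $s=2$ case of Lemma \ref{le1}, by $m(m+1)(1-q)^2[p]_q^2$ modulo $[p]_q^3$. Your write-up just makes explicit the two bookkeeping points (the multiplier being divisible by $[p]_q^2$, and the harmlessness of multiplying the two mod-$[p]_q^2$ congruences) that the paper leaves implicit.
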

\end{remark}

\section*{Acknowledgement}
This work is supported by the Doctorate Foundation of Northwestern Polytechnical University (cx201326).

\end{document}